\newtheorem{thm}{Theorem}[section]
\newtheorem{cor}[thm]{Corollary}
\newtheorem{lem}[thm]{Lemma}
\theoremstyle{definition}
\DeclareMathOperator{\SL}{SL}
\DeclareMathOperator{\SO}{SO}
\DeclareMathOperator{\PSO}{PSO}
\DeclareMathOperator{\GO}{O}
\DeclareMathOperator{\GL}{GL}
\DeclareMathOperator{\Sp}{Sp}
\DeclareMathOperator{\PSL}{PSL}
\DeclareMathOperator{\PGL}{PGL}
\DeclareMathOperator{\Aut}{Aut}
\DeclareMathOperator{\grass}{Grass}
\newcommand{\bbC}{\mathbb{C}}
\newcommand{\bbQ}{\mathbb{Q}}
\newcommand{\bbZ}{\mathbb{Z}}
\newcommand{\bbP}{\mathbb{P}}
\newcommand{\bbR}{\mathbb{R}}
\newcommand{\varSLG}{\mathbb{X}_1}
\newcommand{\varF}{\mathbb{X}_2}
\begin{document}

\title[YTD for cohomogeneity one manifolds]{The Yau-Tian-Donaldson conjecture \\ for cohomogeneity one manifolds}

\author{Thibaut Delcroix}
\address{Thibaut Delcroix, Univ Montpellier, CNRS, Montpellier, France}
\email{thibaut.delcroix@umontpellier.fr}
\urladdr{http://delcroix.perso.math.cnrs.fr/}

\date{2020}

\begin{abstract}
We prove the Yau-Tian-Donaldson conjecture for cohomogeneity one manifolds, that is, for projective manifolds equipped with a holomorphic action of a compact Lie group with at least one real hypersurface orbit. 
Contrary to what seems to be a popular belief, such manifolds do not admit extremal Kähler metrics in all Kähler classes in general. %\cite{AndrewHwang} \cite{ChenChengII} 
More generally, we prove that for rank one polarized spherical varieties, \(G\)-uniform K-stability is equivalent to K-stability with respect to special \(G\)-equivariant test configurations. 
This is furthermore encoded by a single combinatorial condition, checkable in practice. 
We illustrate on examples and answer along the way a question of Kanemitsu.
\end{abstract}

\maketitle

\section{Introduction}

A compact complex manifold \(X\) equipped with a holomorphic action of a real compact Lie group \(K\) such that there is at least one real hypersurface orbit \(K\cdot x\) in \(X\) is called a (compact) cohomogeneity one manifold. 
Such manifolds have played a key role in complex geometry, especially in Kähler geometry, for being the easiest non-homogeneous manifolds to study. 
Indeed, under the previous assumption, the generic orbit of \(K\) is a real hypersurface as well, so that any \(K\)-equivariant condition on the manifold must reduce to a one-variable condition. 
It is the underlying reason why Calabi's construction \cite{Cal82} of extremal Kähler metrics on Hirzebruch surfaces works, a construction which gave birth to the Calabi ansatz which applies in many more situations. 
It was also the method Koiso and Sakane \cite{KS86} used to produce the first examples of non-homogeneous compact Kähler-Einstein manifolds with positive curvature. 

Though these initial examples are a bit old, very recent works highlight how these manifolds are still very useful in complex geometry. 
Let us simply illustrate this with one example, where cohomogeneity one manifolds appear under the guise of \emph{two-orbits manifolds} (under a complex Lie group). 
A well studied and long-standing conjecture attributed to Iskovskikh stated that Picard rank one projective manifolds should have a semistable tangent bundle (in the sense of Mumford-Takemoto). 
Kanemitsu \cite{Kan} disproved this conjecture by studying the Picard rank one, two-orbits manifolds whose classification was obtained by Pasquier \cite{Pas09}. 
It was actually not the first appearance of these manifolds in Kähler geometry to disprove a conjecture since the author proved in \cite{DelKSSV} that they provided infinitely many counterexamples to a shorter-lived conjecture of Odaka and Okada \cite{OO13} stating that all Picard rank one Fano manifolds should be K-semistable. We must inform the reader here that the conjecture of Odaka and Okada was disproved as well by Fujita \cite{Fuj17} with two counterexamples. 

In the present note, we will not disprove any conjecture but confirm the Yau-Tian-Donaldson conjecture for projective cohomogeneity one manifolds. 

\begin{thm}
\label{thm_YTD}
On a projective cohomogeneity one manifold, a Kähler class admits a constant scalar curvature Kähler metric if and only if it is K-stable with respect to special equivariant test configurations. 
The latter amounts to a single combinatorial condition checkable in practice. 
\end{thm}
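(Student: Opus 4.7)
The strategy is to deduce Theorem~\ref{thm_YTD} from the more general equivalence announced in the abstract between $G$-uniform K-stability and K-stability with respect to special $G$-equivariant test configurations on rank one polarized spherical varieties. So the first step is to recognize that any projective cohomogeneity one manifold, viewed as a variety for $G=K^{\bbC}$, is a rank one polarized spherical variety. The existence of a real hypersurface $K$-orbit forces the open $G$-orbit to have complement of complex codimension one; combined with standard structure theory for almost-homogeneous varieties this yields the spherical-plus-rank-one structure.

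The second step invokes the Yau-Tian-Donaldson machinery available for reductive group actions. The implication from cscK to K-stability with respect to special equivariant test configurations follows from a Berman-Darvas-Lu style argument applied equivariantly. Conversely, $G$-uniform K-stability implies the existence of a cscK metric, using the coercivity-of-Mabuchi approach developed in works of Berman-Boucksom-Jonsson and Chi Li and its refinements to the reductive setting. The theorem is thus reduced to the rank one spherical equivalence.

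The core step, and in my view the main obstacle, is this last equivalence. To prove it I would exploit the fact that on a rank one polarized spherical variety the $G$-equivariant filtrations of the section ring are controlled by a one-dimensional combinatorial object: the valuation cone is one-dimensional, hence has at most two extremal rays, and these are precisely the loci producing special equivariant test configurations. An arbitrary $G$-equivariant test configuration can then be decomposed along these rays. Meanwhile the non-Archimedean Mabuchi and $J$-type functionals, being integrals against the Duistermaat-Heckman measure of the moment polytope, become explicit one-variable expressions, so by convexity the uniform coercivity inequality reduces to its restriction to the extremal special configurations.

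Once this is in place the promised \emph{single} combinatorial condition emerges: the two extremal inequalities collapse into one, either because one of the extremal rays is automatically non-destabilizing (for instance when it lies on the boundary of the cone of effective equivariant test configurations), or because the two scalar slopes are forced to agree by the spherical combinatorics. I expect the hardest technical points to be (a) setting up the dictionary between general $G$-equivariant filtrations and one-variable combinatorial data with enough precision to handle non-special test configurations, and (b) translating the abstract uniform K-stability inequality, originally phrased in terms of the minimum norm, into the sharp one-variable inequality that can be read off from the moment polytope, the Duistermaat-Heckman density, and the two extremal $G$-invariant valuations.
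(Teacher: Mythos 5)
Your overall architecture matches the paper's: identify projective cohomogeneity one manifolds with rank one spherical varieties, use the known implication cscK $\Rightarrow$ K-(poly)stability (Berman--Darvas--Lu) and the converse $G$-uniform K-stability $\Rightarrow$ cscK (Odaka's appendix to the cited work on spherical varieties), and reduce everything to the equivalence between $G$-uniform K-stability and K-stability with respect to special $G$-equivariant test configurations. Two remarks before the main point: you do not address irrational Kähler classes (the combinatorial criteria apply to polarizations; the paper reduces to rational classes using openness of the extremal cone and the Mori dream space property of spherical manifolds), and the ``single condition'' at the end is not a collapse of two inequalities but simply that in the horospherical case the condition is the vanishing of the Futaki character on a one-dimensional center, while in the non-horospherical case there is a unique special equivariant test configuration.

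The genuine gap is in your core step. You propose to decompose an arbitrary $G$-equivariant test configuration ``along the extremal rays of the valuation cone'' and conclude ``by convexity'' that the coercivity inequality reduces to the special configurations. This does not work as stated: in the combinatorial dictionary, special equivariant test configurations correspond to \emph{affine} functions on the moment interval $[s_-,s_+]$ (equivalently, to $G$-invariant valuations), whereas general equivariant test configurations correspond to arbitrary piecewise linear convex functions. The extremal rays of the cone of convex PL functions modulo affine ones are the single-kink functions $t\mapsto\max(0,t-c)$, which are \emph{not} special, and the linear functional $\mathcal{L}$ being non-negative on affine functions does not formally imply anything about its values on kinked functions. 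The whole content of the reduction is analytic, not formal: one must use that the density $aP-Q$ in the interior term of $\mathcal{L}$ is non-negative exactly on a sub-segment $[t_-,t_+]$ (because $a-\sum_{\alpha}\langle\alpha,\varpi\rangle/\langle\alpha,\chi+t\sigma\rangle$ is concave) and an analysis of vanishing orders of $P$ and $Q$ at the endpoints. The paper then takes a destabilizing sequence $(f_n)$, normalizes it, extracts a locally uniform limit $f_\infty$ by a pre-compactness result, and compares $f_\infty$ with the affine chord $h$ through $(t_\pm, f_\infty(t_\pm))$: convexity gives $h\le f_\infty$ outside $[t_-,t_+]$ and $h\ge f_\infty$ inside, so both the positive and negative contributions to $\mathcal{L}(h)$ are bounded above by those of $\mathcal{L}(f_\infty)$, yielding $0=\mathcal{L}(h)\le\mathcal{L}(f_\infty)\le 0$ and then a contradiction via a case analysis (slope of $h$, horospherical or not). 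Without the sign-structure lemmas and this chord comparison, your plan has no mechanism to rule out destabilization by non-special test configurations, which is precisely what fails for spherical varieties of higher rank.
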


The content of the note is as follows. 
In Section~\ref{sec_recollections} we explain how projective cohomogeneity one manifolds coincide with (non-singular) rank one spherical varieties, briefly recall their classification, then recall some of the results in \cite{KSSV2} for the special case of rank one spherical varieties. 
Section~\ref{sec_proof} is devoted to the proof of our main theorem, and of the corresponding K-stability statement which holds for singular varieties as well. 
In the remaining section, we illustrate the result on some examples. 
It appears that, due to various different hypotheses in papers dealing with cohomogeneity one manifolds, a common belief is that they admit extremal Kähler metrics in all Kähler classes (see e.g. \cite{AndrewHwang,ChenChengII}. 
We thus exhibit an example of cohomogeneity one projective manifold which admits both Kähler classes with cscK metrics and Kähler classes with no extremal Kähler metrics. 
We then answer a question of Kanemitsu on the existence of Kähler-Einstein metrics on non-horospherical Picard rank one manifolds, then study two related Picard rank two cohomogeneity one manifolds and show that they are strong Calabi dream manifolds in the sense of \cite{JesusMartinezGarcia}. 

\subsection*{Acknowledgements}
This research received partial funding from ANR ProjectFIBALGA ANR-18-CE40-0003-01.

\section{Recollections}
\label{sec_recollections}

\subsection{Cohomogeneity one manifolds and spherical varieties}

Let \(X\) be a projective complex manifold, equipped with a holomorphic action of a real connected compact Lie group \(K\). 
The manifold \(X\) is of \emph{cohomogeneity one} if there exists at least one orbit of \(K\) of real codimension one. 
It then follows from \cite[Section~2]{HS82} that the manifold is almost-homogeneous under the action of the complexification \(G:=K^{\bbC}\), that is, \(G\) acts with an open orbit \(G/H\subset X\). 
Furthermore, the complement \(X\setminus G/H\) consists of one or two orbits, on which \(K\) acts transitively. 

If there are two orbits in the complement, they are disconnected and the manifold is \(G\)-equivariantly birational to a \(G\)-homogeneous \(\bbP^1\)-bundle over a generalized flag manifold for \(G\) \cite[Proposition~3.1]{HS82}. 
Such manifolds are, from a different point of view, called rank one horospherical varieties. 
They belong to the large class of spherical varieties, well-studied from the algebraic point of view. 
In fact, in the case where there is only one orbit in the complement, the manifold is a rank one spherical manifold as well \cite[Corollary~2.4]{Cup03}.
We now turn to rank one spherical varieties in general. 

\subsection{On rank one spherical varieties}

Let \(G\) denote a connected complex reductive group, and fix a Borel subgroup \(B\) of \(G\) and a maximal torus \(T\) in \(B\). 
We let \(\mathfrak{X}(T)\) denote the lattice of characters of \(T\). 
We denote by \(R^+\) the set of positive roots of \(G\) and by \(2\varpi=\sum_{\alpha\in R^+}\alpha\) the sum of its positive roots. 

A spherical subgroup of \(G\) is a subgroup \(H\) such that \(BH\) is open in \(G\). 
The homogeneous space \(G/H\) is then called spherical. 
The rank of a spherical homogeneous space is the rank of its weight lattice \(M\), defined as the set of weights of \(B\)-semi-invariant rational functions on \(G/H\). 

Spherical homogeneous spaces of rank one are completely classified up to parabolic induction, by the work of Akhiezer \cite{Akh}. 
More precisely, there is a finite list of families of primitive cases \((G,H)\) (we only list the groups \(G\) and \(H\) up to isogeny, there can be one or two corresponding homogeneous spaces \(G/H\) depending on the couple \((G,H)\)): 
\begin{itemize}
\item the reductive symmetric spaces of rank one 
\begin{itemize}
\item \((\SO_{m+1},S(\GO_1\times\GO_m))\) for \(m\geq 1\),
\item \((\SL_{m+1},S(\GL_1\times\GL_m))\) for \(m\geq 2\),
\item \((\Sp_{2m},\Sp_2\times\Sp_{2n-2})\) for \(n\geq 3\),
\item \((F_4,\SO_9)\),
\end{itemize}   
\item four other affine homogeneous spaces corresponding to the couples \((G_2,\SL_3)\), \((\SO_7,G_2)\), 
\item and three non affine families described precisely in \cite[p. 68, Examples]{Akh}. 
\end{itemize} 
An arbitrary spherical homogeneous space of rank one \(G/H\) is then obtained from the primitive cases as follows. 
There exists a parabolic subgroup \(P\) of \(G\), and a reductive quotient \(\tilde{G}\) of \(P\) such that \(G/H\) is the quotient of \(G\times \tilde{G}/\tilde{H}\) where \(P\) acts diagonally and \(\tilde{G}/\tilde{H}\) is in the list of primitive spherical homogeneous spaces of rank one. 

As already highlighted in the discussion of cohomogeneity one manifolds, there are two very different types of rank one spherical homogeneous spaces, according to whether \(\tilde{G}=\bbC^*\) or \(\tilde{G}\) is semisimple. 
In the case when \(\tilde{G}=\bbC^*\), the resulting homogeneous space \(G/H\) is called a rank one horospherical homogeneous space. 
Then the group of \(G\)-equivariant automorphisms of \(G/H\) is of dimension one. 
Otherwise, the group of \(G\)-equivariant automorphisms of \(G/H\) is finite. 

Spherical varieties of rank one are the normal \(G\)-equivariant embeddings of rank one spherical homogeneous spaces \(G/H\). 
They are classified by colored fans \cite{Kno91} in \(N\otimes\bbR\), where \(N=\operatorname{Hom}(M,\bbZ)\). 
For primitive rank one spherical homogeneous spaces, there is a unique \(G\)-equivariant projective embedding, described in details as well (for most cases) in \cite{Akh}. 
The corresponding fan is without colors and consists either of the toric fan of \(\bbP^1\) (if \(G=\bbC^*\)), or consists of a single one-dimensional cone and its zero dimensional face \(\{0\}\). 
If \(X\) is a projective horospherical rank one spherical variety, its colored fan consists again of the toric fan of \(\bbP^1\), but now each of the two one-dimensional cones can carry colors (and if they do, the corresponding added \(G\)-orbits are not of codimension one). 
If \(X\) is a projective non-horospherical rank one spherical variety, the colored fan again consists of \(\{0\}\) and a single one-dimensional cone, which can now be colored (again, in this case, the added \(G\)-orbit is not of codimension one).  
 If \(X\) is a non-horospherical projective rank one spherical variety,
the generator of \(M\) which evaluates negatively on the colored cone is called the \emph{spherical root} of \(G/H\). 

\subsection{On K-stability of spherical varieties}

Let \(X\) be a rank one \(G\)-spherical variety, with spherical lattice \(M\). 
Let \(N = \operatorname{Hom}(M,\bbZ)\). 
If it is not horospherical, denote by \(\sigma\) its spherical root, and let \(\sigma^*\in N\) be the dual element. 
If it is horospherical, choose any generator \(\sigma\) of \(N\), and let again \(\sigma^*\in N\) be the dual element.  

Let \(L\) be an ample line bundle on \(X\), with moment polytope \(\Delta_+\). 
Recall that the moment polytope is the closure of the set of all \(\lambda/k \in \mathfrak{X}(T)\otimes\bbR\), where \(\lambda\) run over the weights of \(B\)-stable lines in \(H^0(X,L^{\otimes k})\).
It lies in an affine line in \(\mathfrak{X}(T)\otimes \bbR\), with direction \(M\otimes\bbR\). 
Choose an element \(\chi\) of \(\Delta_+\), then there exists \(s_-<s_+ \in \bbQ\) such that
\[\Delta^+ = \{\chi+t\sigma \mid t\in [s_-,s_+]\}. \]
Let \(R_X^+\) denote the set of positive roots of \(G\) which do not vanish identically on \(\Delta_+\). 
Set for \(t\in \bbR\),  
\[ P(t) := \prod_{\alpha\in R_X^+} \frac{\langle \alpha, \chi +t\sigma \rangle}{\langle \alpha, \varpi \rangle}, 
\qquad \qquad 
Q(t) := \left(\sum_{\alpha\in R_X^+} \frac{\langle \alpha, \varpi \rangle}{\langle \alpha, \chi +t\sigma \rangle} \right)P(t). \]
Finally, for any continuous function \(g\) on \([s_-,s_+]\), set 
\[ \mathcal{L}(g) = g(s_-)P(s_-) + g(s_+)P(s_+) - \int_{s_-}^{s_+} 2g(t)(aP(t)-Q(t))\mathop{dt} \]
where \(a\) is such that \(\mathcal{L}(1)=0\), 
and 
\[ \mathcal{J}(g) = \int_{s_-}^{s_+} (g(t)-\inf g)P(t)\mathop{dt}. \]
Note that the moment polytope lies in the positive Weyl chamber of \(G\), that is, all positive roots evaluate non-negatively on elements of \(\Delta_+\). As a consequence, \(P\) and \(Q\) are positive on \(]s_-,s_+[\), and \( \mathcal{J}(g) =0\) if and only if \(g\equiv 0\).

The following criterions for \(G\)-uniform K-stability of \(L\) follow from \cite{KSSV2}. 
Note that we switch here from concave to convex functions to simplify notations. 

\begin{thm}
\label{thm_KSSV2}
\noindent
\begin{enumerate}
\item A polarized rank one \emph{horospherical} variety \((X,L)\) is \(G\)-uniformly K-stable if and only if there exists \(\varepsilon >0\) such that
\[ 
\mathcal{L}(g)  \geq \varepsilon \inf_{l\in \bbR^*} \mathcal{J}(g+l) 
\]
for all rational piecewise linear convex functions \(g:[s_-,s_+]\to\bbR\). 
\item A polarized rank one spherical variety \((X,L)\) which is \emph{not horospherical} is \(G\)-uniformly K-stable if and only if there exists \(\varepsilon >0\) such that
\[ 
\mathcal{L}(g)  \geq \varepsilon \mathcal{J}(g) 
\]
for all non-decreasing rational piecewise linear convex functions \(g:[s_-,s_+]\to\bbR\).  
\end{enumerate}
\end{thm}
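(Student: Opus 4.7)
The plan is to obtain both assertions as specializations to rank one of the general $G$-uniform K-stability criterion for polarized spherical varieties established in \cite{KSSV2}. There, $G$-uniform K-stability is characterized by a coercivity inequality of the form
\[ DF(\phi)\geq \varepsilon \inf_{\ell}\mathcal{J}(\phi+\ell) \]
where $\phi$ runs over rational piecewise linear convex functions on the appropriate cone of admissible equivariant test configurations and $\ell$ over linear forms induced by the connected center of the group of $G$-equivariant automorphisms.

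The first step is to make $DF$ and $\mathcal{J}$ fully explicit in the one-dimensional setting. Parametrizing $\Delta_+$ as $\chi+t\sigma$ with $t\in[s_-,s_+]$, the Duistermaat-Heckman measure on $\Delta_+$ pushes forward to $P(t)\,dt$ (up to the harmless universal constant $\prod_{\alpha\in R_X^+}\langle\alpha,\varpi\rangle$ that cancels in every ratio), and the logarithmic derivative of the Weyl product $P$ produces the scalar curvature contribution encoded by $Q$. The bulk integral of $DF$ thus becomes $\int 2g(aP-Q)\,dt$ with $a$ pinned down by the normalization $\mathcal{L}(1)=0$, while the boundary piece (indexed in \cite{KSSV2} by codimension one $G$-orbits of $X$) collapses to $g(s_-)P(s_-)+g(s_+)P(s_+)$, facets of $\Delta_+$ coming from Weyl chamber walls rather than $G$-invariant divisors being automatically killed by the vanishing of $P$ there. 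In the same way $\mathcal{J}(g)$ is the specialization of the $J$-norm of \cite{KSSV2}.

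The second step is to handle the two cases separately. In the horospherical case the connected center of the group of $G$-equivariant automorphisms is a copy of $\bbC^*$ acting on $\Delta_+$ by translation along $\sigma$, which gives rise to the shift $g\mapsto g+l$ appearing in (1); the admissible cone here is the full space of rational piecewise linear convex functions on $[s_-,s_+]$ since both rays of the fan of $\bbP^1$ yield equivariant test configurations. In the non-horospherical case the equivariant automorphism group is finite so no shift is needed, and the colored fan has a single ray $\bbR_{\geq 0}\sigma^*$, so equivariant test configurations---and in particular special ones---are parametrized precisely by non-decreasing rational piecewise linear convex functions, yielding (2).

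The hard part will be twofold: first, identifying the boundary terms of the \cite{KSSV2} Donaldson-Futaki formula with $g(s_\pm)P(s_\pm)$ with the correct weights and signs in both cases; second, justifying in the non-horospherical case that testing against non-decreasing convex functions is sufficient to detect $G$-uniform K-stability. The latter is where the rank one hypothesis is truly crucial, as it allows any convex function on $[s_-,s_+]$ to be written, modulo an affine function lying in the kernel of $\mathcal{L}$, as a sum of monotone convex pieces whose individual Donaldson-Futaki values are controlled using the positivity of $P$ on the open interval.
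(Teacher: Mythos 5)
Your overall plan---specializing the general $G$-uniform K-stability criterion of \cite{KSSV2} to the one-dimensional moment polytope, identifying the Duistermaat--Heckman density with $P$, the scalar-curvature/mean-weight term with $Q$, and the boundary contributions with $g(s_\pm)P(s_\pm)$ (Weyl-wall facets being killed by the vanishing of $P$)---is exactly what the paper does: it states the theorem as following directly from \cite{KSSV2} and offers no further proof, so your first ``hard part'' is the entire content and your description of it is accurate.

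The one point where your plan would go astray is the second ``hard part.'' The restriction to non-decreasing convex functions in the non-horospherical case is not something to be \emph{justified} by decomposing an arbitrary convex function into monotone pieces modulo affine functions. In \cite{KSSV2} the test functions are from the outset in bijection with $G$-equivariant (semi-ample) test configurations, and for a non-horospherical rank one variety the valuation cone is the half-line $-\bbR_{\geq 0}\sigma^*$, so the admissible functions are precisely the non-decreasing convex ones; there is no larger class of ``all convex functions'' to reduce from. Indeed, the inequality $\mathcal{L}(g)\geq\varepsilon\mathcal{J}(g)$ for \emph{all} convex $g$ is strictly stronger and generically false for non-horospherical varieties (apply it to $g(t)=t$ and $g(t)=-t$: since $\mathcal{L}$ is linear and $\mathcal{J}$ nonzero on both, the two inequalities are incompatible unless a Futaki-type integral vanishes), so a decomposition argument aimed at deducing the stronger statement from the weaker one cannot work. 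Once you replace that step by the correct observation that the correspondence between special/equivariant test configurations and convex functions already carries the monotonicity constraint, the proposal matches the paper's (implicit) argument.
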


\section{Uniform K-stability of rank one spherical varieties}
\label{sec_proof}

In this section we will prove Theorem~\ref{thm_YTD} as a consequence of the following K-stability result applying to singular varieties as well. 

\begin{thm}
\label{thm_main}
A polarized rank \(1\) \(G\)-spherical variety is \(G\)-uniformly K-stable if and only if it is K-stable with respect to \(G\)-equivariant special test configurations. 
\end{thm}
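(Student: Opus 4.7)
The forward implication, from \(G\)-uniform K-stability to K-stability against \(G\)-equivariant special test configurations, is immediate, so I focus on the converse. By Theorem~\ref{thm_KSSV2}, the desired \(G\)-uniform K-stability amounts to a coercivity estimate \(\mathcal{L}(g)\geq\varepsilon\mathcal{J}(g)\) (modulo affine shifts in the horospherical case, restricted to non-decreasing functions otherwise) on the cone of rational piecewise linear convex functions on \([s_-,s_+]\). Because the polytope is one-dimensional, the plan is to reduce this coercivity to a pointwise inequality between two explicit continuous functions of a single variable on a compact interval.

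The first step is to match the \(G\)-equivariant special test configurations with a distinguished family inside the cone above. The combinatorics of colored fans force a rank one equivariant special test configuration to be, up to positive scaling, determined by the choice of a rational interior point \(s\in(s_-,s_+)\cap\bbQ\); the associated degeneration subdivides the unique colored cone along the ray through \(s\) and has an irreducible central fibre. Through the correspondence of \cite{KSSV2} between \(G\)-equivariant test configurations and rational piecewise linear convex functions, the Donaldson--Futaki invariant of such a test configuration should equal, up to a positive factor, \(\mathcal{L}(g_s)\) for the tent function \(g_s(t)=\max(0,t-s)\) (with an extra mirror family \(\max(0,s-t)\) in the horospherical case). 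K-stability against special test configurations then reads \(\mathcal{L}(g_s)\geq 0\) for all interior \(s\), with strict inequality away from the trivial locus.

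A direct integration by parts gives the closed-form identities
\[ \frac{d^2}{ds^2}\mathcal{L}(g_s)=2\bigl(Q(s)-aP(s)\bigr),\qquad \frac{d^2}{ds^2}\mathcal{J}(g_s)=P(s), \]
together with explicit boundary values at \(s=s_\pm\) in terms of \(\mathcal{L}(t)\), \(\mathcal{L}(1)=0\), and low moments of \(P\). Both \(\mathcal{L}(g_s)\) and \(\mathcal{J}(g_s)\) are therefore explicit real-analytic functions of \(s\). To pass from strict pointwise positivity \(\mathcal{L}(g_s)>0\) on \((s_-,s_+)\) to the uniform bound \(\mathcal{L}(g_s)\geq\varepsilon\mathcal{J}(g_s)\), it suffices by compactness to control the ratio near the endpoints, where both quantities vanish. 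The asymptotics of \(P\) and \(Q\) at the boundary of the polytope — governed by which positive roots of \(G\) vanish on the boundary faces — should yield matching vanishing orders so that the ratio extends continuously with a positive limit, except in the degenerate cases already accounted for by the translation quotient (horospherical) or the monotonicity restriction (non-horospherical).

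To conclude, an arbitrary rational piecewise linear convex function \(g\) decomposes on the interval as \(\ell+\sum_i c_i g_{s_i}\) with \(c_i>0\) and \(s_i\) its interior break points. Since \(\mathcal{L}\) is linear in \(g\) and \(\mathcal{J}\) is controlled by the second-derivative measure \(g''=\sum_i c_i\delta_{s_i}\) (up to an affine normalization absorbed by the affine-shift quotient or the monotonicity condition), integrating the uniform pointwise inequality against \(g''\) should produce \(\mathcal{L}(g)\geq\varepsilon'\mathcal{J}(g)\) on the full cone, which finishes the proof by Theorem~\ref{thm_KSSV2}. The hard part will be the endpoint analysis: determining the exact vanishing orders of \(\mathcal{L}(g_s)\) and \(\mathcal{J}(g_s)\) at \(s_\pm\) in terms of the spherical datum, and checking that mismatches occur only along the expected trivial directions. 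With this in hand, the \emph{single combinatorial condition} asserted in Theorem~\ref{thm_YTD} reduces to positivity on \([s_-,s_+]\) of the explicit polynomial-like function \(s\mapsto\mathcal{L}(g_s)\), decidable by standard one-variable methods.
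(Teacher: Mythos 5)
There is a genuine gap, and it sits at the very heart of the argument: you have misidentified the class of \(G\)-equivariant \emph{special} test configurations. A test configuration obtained by subdividing the colored cone along a ray through an interior point \(s\) has a \emph{reducible} (hence non-normal) central fibre — in the one-dimensional Donaldson-type picture, the tent function \(g_s(t)=\max(0,t-s)\) degenerates the variety to a union of two pieces glued along a divisor. Such configurations are not special. The special ones are exactly the degenerations with normal central fibre, and for a rank one spherical variety these are very few: in the non-horospherical case there is a \emph{unique} \(G\)-equivariant special test configuration (the degeneration to the horospherical limit, corresponding to the linear function \(t\mapsto t\)), and in the horospherical case the condition reduces to the vanishing of the Futaki character on the at most one-dimensional centre of the automorphism group, i.e.\ to \(\mathcal{L}(\ell)=0\) for affine \(\ell\). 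So the hypothesis of the theorem only gives you \(\mathcal{L}\) nonnegative (resp.\ zero) on \emph{affine} functions; it does not give \(\mathcal{L}(g_s)\geq 0\) for all interior \(s\). Deriving the latter from the former is precisely the content of the theorem, so your plan assumes what must be proved. (Your closed-form second-derivative identities for \(\mathcal{L}(g_s)\) and \(\mathcal{J}(g_s)\) are correct, but they are the starting point of the analysis, not a consequence of the hypothesis; there is also a secondary issue in the last step, since \(\mathcal{J}\) is only subadditive, not linear, and the horospherical norm involves an infimum over affine shifts, so the pointwise inequality cannot simply be integrated against \(g''\).)

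The paper's route is quite different and worth internalizing. It argues by contradiction from a destabilizing sequence \((f_n)\) with \(\lVert f_n\rVert=1\) and \(\mathcal{L}(f_n)\to l\leq 0\), normalizes it (using exactly the vanishing of \(\mathcal{L}\) on affine functions, i.e.\ the special-test-configuration hypothesis, to add subdifferentials without changing \(\mathcal{L}\)), extracts a limit \(f_\infty\) by the pre-compactness result of \cite{KSSV2}, and then exploits two structural facts about the density \(aP-Q\): it is non-negative exactly on a subinterval \([t_-,t_+]\) (by concavity of \(a-\sum_\alpha\langle\alpha,\varpi\rangle/\langle\alpha,\chi+t\sigma\rangle\)), and \(P>0\) there. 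Comparing \(f_\infty\) with its secant line \(h\) over \([t_-,t_+]\) — which satisfies \(h\geq f_\infty\) where \(aP-Q\geq 0\) and \(h\leq f_\infty\) elsewhere — yields \(0=\mathcal{L}(h)\leq\mathcal{L}(f_\infty)\leq l\leq 0\), and the strictness needed for the final contradiction comes from the monotonicity restriction (non-horospherical case) or the normalization at an interior point (horospherical case). If you want to salvage your pointwise strategy, you would first need to prove \(\mathcal{L}(g_s)>0\) for interior \(s\) from the affine data alone; the secant-line comparison above is essentially the mechanism that does this.
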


Let us first show how it proves the Yau-Tian-Donaldson conjecture for cohomogeneity one manifolds. 

\begin{proof}[Proof of Theorem~\ref{thm_YTD}]
It suffices to work on rational Kähler classes since the extremal cone is open in the Kähler cone \cite{LS93}, 
and the Kähler cone coincides with the cone of ample real line bundles on spherical manifolds since these manifolds are Mori dream spaces, %
hence rational Kähler classes are dense in the Kähler cone. 
One of the direction is known: existence of cscK metrics implies K-(poly)stability \cite{BDL20}, 
hence in particular K-stability with respect to special equivariant test configurations.
For the other direction, it suffices to apply Theorem~\ref{thm_main} together with Odaka's appendix to \cite{KSSV2}, which shows that for spherical manifolds, \(G\)-uniform K-stability implies the existence of cscK metrics. 
\end{proof}

The result is of course more precise in view of Theorem~\ref{thm_KSSV2}. 
It shows first that for rank one \(G\)-horospherical varieties, \(G\)-uniform K-stability is equivalent to the vanishing of the Futaki invariant on the (at most one-dimensional) center of the group of automorphism. 
Second, if the variety is not horospherical, it admits a unique \(G\)-equivariant special test configuration, and it suffices to check that its Donaldson-Futaki invariant is positive. 

In the course of the proof, we will use the following remarkable properties for the sign of \(aP-Q\). 
Recall first that by definition of a moment polytope, the polynomials \(P\) and \(Q\) are positive on \(]s_-,s_+[\). 

\begin{lem}
\label{lem_when0}
Assume that \(P(s_{\pm})=0\), then \((aP-Q)(t)\) is negative for \(t\in [s_-,s_+]\) close to \(s_{\pm}\).  
\end{lem}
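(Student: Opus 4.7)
The plan is to show that the ratio $Q(t)/P(t)$ diverges to $+\infty$ as $t$ approaches an endpoint where $P$ vanishes, which immediately forces $aP-Q$ to be negative there. The key identity, immediate from the definitions, is that whenever $P(t)\neq 0$,
\[
\frac{Q(t)}{P(t)} \;=\; \sum_{\alpha\in R_X^+}\frac{\langle\alpha,\varpi\rangle}{\langle\alpha,\chi+t\sigma\rangle}.
\]
On $]s_-,s_+[$, the point $\chi+t\sigma$ lies in the relative interior of the moment polytope; combined with the fact that $\Delta_+$ sits in the positive Weyl chamber, this gives $\langle\alpha,\chi+t\sigma\rangle>0$ for each $\alpha\in R_X^+$. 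Since $\varpi$ is a positive multiple of the sum of positive roots and therefore strictly dominant, $\langle\alpha,\varpi\rangle>0$ as well, so $Q(t)/P(t)$ is a finite sum of strictly positive real numbers on the open interval.

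Now assume $P(s_+)=0$ (the case of $s_-$ is symmetric). Then at least one factor $\langle\alpha_0,\chi+s_+\sigma\rangle$ must vanish; being non-negative and continuous on $[s_-,s_+]$, this factor tends to $0$ as $t\nearrow s_+$, and therefore the summand $\langle\alpha_0,\varpi\rangle/\langle\alpha_0,\chi+t\sigma\rangle$ tends to $+\infty$. All other summands remain non-negative, so $Q(t)/P(t)\to+\infty$ as $t\nearrow s_+$. Writing $(aP-Q)(t)=P(t)\bigl(a-Q(t)/P(t)\bigr)$ with $P(t)>0$ on $]s_-,s_+[$, we deduce that $(aP-Q)(t)<0$ for $t\in[s_-,s_+]$ sufficiently close to $s_+$.

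The argument is quite short and presents no real obstacle; the one thing worth flagging is that even if several positive roots vanish simultaneously at the endpoint, so that $Q(s_+)=0$ as well and $aP-Q$ may vanish at the endpoint itself, the divergence of a single summand in the formula above already suffices to obtain the claimed strict negativity on a one-sided neighborhood of $s_+$ inside $[s_-,s_+]$.
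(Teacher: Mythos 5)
Your proof is correct and is essentially the argument of the paper: both rest on the identity \(Q/P=\sum_{\alpha\in R_X^+}\langle\alpha,\varpi\rangle/\langle\alpha,\chi+t\sigma\rangle\) together with the positivity of every summand on \(]s_-,s_+[\). The paper phrases this as a comparison of vanishing orders (\(aP\) vanishes to order \(\operatorname{Card}(V_\pm)\) at \(s_\pm\) while \(Q\) vanishes to order \(\operatorname{Card}(V_\pm)-1\)), whereas you let the ratio \(Q/P\) diverge to \(+\infty\); these are the same observation, and your formulation has the minor advantage of not needing the exactness of the vanishing orders.
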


\begin{proof}
Let \(V_{\pm} \subset R_X^+\) be the subset of roots \(\alpha\in R_X^+\) such that \(\langle \alpha, \chi + s_{\pm}\sigma \rangle = 0\). 
If \(V_{\pm}\) is not empty, then the polynomial \(aP\) vanishes to the order exactly \(\operatorname{Card}(V_{\pm})\) at \(s_{\pm}\), while the polynomial \(Q\) vanishes to the order exactly \(\operatorname{Card}(V_{\pm}) -1\) at \(s_{\pm}\). 
It follows that in the same situation, since \(P\) and \(Q\) are positive on \([s_-,s_+]\), \((aP-Q)(t)\) is negative when \(t\in [s_-,s_+]\) is close enough to \(s_{\pm}\). 
\end{proof}

\begin{lem}
\label{lem_nonneglocus}
The locus where \(aP-Q\) is non-negative on \(\Delta\) is \([t_-,t_+]\) for some \(t_{\pm}\in [s_-,s_+]\). 
\end{lem}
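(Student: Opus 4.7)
The plan is to reduce the lemma to the convexity of the rational function $R(t) := Q(t)/P(t)$. On the interior $]s_-,s_+[$ we have $P > 0$, so the sign of $aP - Q$ matches that of $a - R$. The non-negativity locus on the open interval thus coincides with the sublevel set $\{R \leq a\}$, and the statement (on the open interval) reduces to showing that $R$ is convex.

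The key observation is that
\[R(t) = \sum_{\alpha \in R_X^+} \frac{\langle \alpha, \varpi\rangle}{\langle \alpha, \chi + t\sigma\rangle},\]
a sum of functions of the form $c/(u+vt)$ with $c > 0$ and $u+vt > 0$ on $]s_-,s_+[$: positivity of $\langle \alpha,\varpi\rangle$ follows from $\varpi$ being the half-sum of positive roots, and positivity of $\langle \alpha,\chi+t\sigma\rangle$ on the interior follows from $\Delta_+$ lying in the positive Weyl chamber together with $\alpha\in R_X^+$ not vanishing identically on $\Delta_+$. Each summand has non-negative second derivative $2v^2c/(u+vt)^3$, hence is convex, and so is $R$. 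The sublevel set $\{R \leq a\}$ is therefore a (possibly empty) closed sub-interval of $]s_-,s_+[$.

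To extend to $\Delta=[s_-,s_+]$ I would treat each endpoint $s_\pm$ separately. If $P(s_\pm) > 0$, continuity of $aP - Q$ at $s_\pm$ preserves the interval structure (with $s_\pm$ included or not according to the sign at the boundary). If $P(s_\pm) = 0$, Lemma~\ref{lem_when0} ensures $aP - Q < 0$ in a one-sided neighborhood of $s_\pm$ inside $\Delta$, so the non-negativity locus stays bounded away from $s_\pm$ on the interior. In both cases the non-negativity locus on $\Delta$ is a sub-interval $[t_-,t_+] \subset [s_-,s_+]$, as claimed.

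The main obstacle is to spot the decisive reformulation exhibiting $R$ as a sum of reciprocals of positive affine functions, which is not visible from the definitions of $P$ and $Q$ separately. Once this rewrite is made, the convexity is a one-line computation, and the endpoint analysis reduces to invoking the preceding lemma.
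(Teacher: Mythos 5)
Your proposal is correct and follows essentially the same route as the paper: the paper also divides by \(P\), rewrites \(Q/P\) as the sum \(\sum_{\alpha\in R_X^+}\langle\alpha,\varpi\rangle/\langle\alpha,\chi+t\sigma\rangle\) of reciprocals of positive affine functions, and concludes from concavity of \(a-Q/P\) that its non-negative locus is a segment. Your extra care at the endpoints where \(P(s_\pm)=0\) is a welcome refinement of a point the paper glosses over, but it does not change the argument.
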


\begin{proof}
Since \(P\) is positive on \([s_-,s_+]\), \(aP-Q\) is of the same sign as 
\[a-\sum_{\alpha\in R_X^+}\frac{\langle \alpha, \varpi \rangle}{\langle \alpha, \chi+t\sigma\rangle}\]
on \([s_-,s_+]\). 
Since the inverse of an affine function on \(\bbR\) is convex on the locus where this affine function is positive, the above function is concave on \([s_-,s_+]\). 
It follows that its non-negative locus is a segment in \([s_-,s_+]\). 
\end{proof}

\begin{proof}[Proof of Theorem~\ref{thm_main}]
In order to show the main result by contradiction, we assume that \((X,L)\) is a polarized rank one \(G\)-spherical variety which is K-stable with respect to \(G\)-equivariant special test configurations but not \(G\)-uniformly K-stable. 
For a convex function \(g:[s_-,s_+]\to \bbR\), let us denote by \(\lVert g \rVert\) the quantity 
\(\inf_{l\in\bbR^*}\mathcal{I}(g+l)\) if \(X\) is horospherical, and \(\mathcal{I}(g)\) if not. 
By Theorem~\ref{thm_KSSV2}, since \((X,L)\) is not \(G\)-uniformly K-stable, one can find a sequence \((f_n)\) of rational piecewise linear convex functions from \([s_-,s_+]\) to \(\bbR\) such that for all \(n\), \(\lVert f_n \rVert =1\) and the sequence \((\mathcal{L}(f_n))\) converges to a limit \(l\leq 0\). Note that if the limit can be taken to be strictly negative, then the sequence can be assumed constant. Further note that, if \(X\) is not horospherical, the \((f_n)\) can and are assumed to be non decreasing. 

Let us first modify the sequence a bit. 
Since both \(\mathcal{L}\) and \(\lVert \cdot \rVert\) are invariant under addition of a constant, we may assume that all the functions in the sequence satisfy \(\inf f_n =0\). 
Fix some \(s_0\) in \(]t_-,t_+[\), where \(t_-\) and \(t_+\) are provided by Lemma~\ref{lem_nonneglocus}.
If \(X\) is horospherical, we can further assume that \(0 = \inf f_n = f_n(s_0)\), by adding to \(f_n\) one of its subdifferential at \(s_0\). This does not change \(\lVert f_n \rVert\) by definition, and it does not change \(\mathcal{L}(f_n)\) by the assumption that \((X,L)\) is K-stable with respect to \(G\)-equivariant special test configurations. 
If \(X\) is not horospherical, then since the \(f_n\) are non decreasing, the inf is attained at \(s_-\). 

Under these modifications, the sequence \((\int f_n P )\) is bounded in \(\bbR\). 
If \(X\) is not horospherical, this is immediate since \(\int f_n P = \lVert f_n \rVert = 1\). 
If \(X\) is horospherical, we prove it by contradiction. 
Assume that there is a subsequence of \((f_n)\) (still denoted by \((f_n)\) for simplicity) such that \(\int f_n P \to +\infty\). 
Consider the functions \(g_n=\frac{f_n}{\int f_nP}\). 
Then \(\lVert g_n \rVert \to 0\) while \(\int g_nP = 1\). 
By the pre-compactness result in \cite[Proposition~7.2]{KSSV2}, the sequence \((g_n)\) converges (up to subsequence again) to a function \(g_{\infty}\) on \(]s_-,s_+[\), and the convergence is uniform on all compact subsets. 
The latter ensures that \(\lVert g_n \rVert\) converges to \(\lVert g_{\infty}\rVert\), which is thus equal to zero. 
This is possible only if \(g_{\infty}\) is affine. 
Finally, since \(0 = \inf g_n = g_n(s_0)\), this implies that \(g_{\infty}\) is the zero function. 
This is in contradiction with the convergence \(\lim \int g_n P = 1\).  

Now we can apply the pre-compactness result \cite[Proposition~7.2]{KSSV2} to the sequence \((f_n)\) itself. Up to subsequence, \((f_n)\) thus converges to a convex function \(f_{\infty}\) on \(]s_-,s_+[\) and the convergence is uniform on compact subsets. 

We want to show that, \(\mathcal{L}(f_{\infty})\) is well-defined and less than the limit \(l\) of \(\mathcal{L}(f_n)\).  
Let us first isolate the negative contribution in \(\mathcal{L}(f)\) for an arbitrary non-negative function \(f: [s_-,s_+] \to \bbR\cup\{+\infty\}\) which takes finite values where \(P\) is positive and which is integrable with respect to \((aP(t)-Q(t))\mathop{dt}\). 
By Lemma~\ref{lem_nonneglocus}, there exists \(t_-<t_+\) in \([s_-,s_+]\) such that \(aP-Q\) is non-negative exactly on \([t_-,t_+]\). 
It follows that the negative contribution to \(\mathcal{L}(f)\) is 
\[ - \int_{t_-}^{t_+}2f(t)(aP(t)-Q(t))\mathop{dt} \]
We claim that this is always well defined and finite if for \(f_{\infty}\). 
If \(P\) is strictly positive on \([t_-,t_+]\), then since \(\int f_{\infty} P =1\), the claim holds. 
It is actually always the case that \(P\) is strictly positive on \([t_-,t_+]\), by Lemma~\ref{lem_when0}.

Since we assumed \(\lim \mathcal{L}(f_n) \leq 0\), and the negative contribution in \(\mathcal{L}(f_n)\) converges to \(- \int_{t_-}^{t_+}2f_{\infty}(t)(aP(t)-Q(t))\mathop{dt}\), the positive contribution of \(\mathcal{f_n}\) must converge as well. 
Since the positive contribution is the sum of positive terms 

\[ 
f_n(s_-)P(s_-) + f_n(s_+)P(s_+) - \int_{s_-}^{t_-} 2f_n(t)(aP(t)-Q(t))\mathop{dt} - \int_{t_+}^{s_+} 2f_n(t)(aP(t)-Q(t))\mathop{dt} 
\]
each of these terms must be bounded. 
This implies that \(f_{\infty}\) is integrable with respect to \((aP(t)-Q(t))\mathop{dt}\) and that \(\lim_{t\to s_{\pm}} f_{\infty}(t)\) is finite when \(P(s_{\pm})\) is non-zero. 
By a slight abuse of notations, we let \(f_{\infty}: [s_-,s_+] \to \bbR \cup \{+\infty\}\) be the unique lower semi-continuous extension of \(f_{infty}\). 
The last part of the penultimate sentence shows that \(f_{\infty}(s_{\pm})\) is finite whenever \(P(s_{\pm})\) is, so \(\mathcal{L}(f_{\infty})\) is well defined and \(\mathcal{L}(f_{\infty})\leq l\). 

Consider the affine function 
\[ h(t) = f_{\infty}(t_-) + \frac{t-t_-}{t_+-t_-}(f_{\infty}(t_+)-f_{\infty}(t_-)). \]
Note that the values \(f_{\infty}(t_{\pm})\) are finite by the discussion above. 
Convexity of \(f_{\infty}\) implies \(h \leq f_{\infty}\) on \([s_-,t_-] \cup [t_+,s_+]\) and \(h\geq f_{\infty}\) on \([t_-,t_+]\). 
Thus both the positive and negative contribution in \(\mathcal{L}(h)\) are lower than that in \(\mathcal{L}(f_{\infty})\), hence 
\[0 = \mathcal{L}(h)\leq \mathcal{L}(f_{\infty})\leq l \leq 0.\]
In particular, we have shown that K-stability with respect to \(G\)-equivariant special test configurations implies \(G\)-equivariant K-semistability. 

To conclude the proof it remains to obtain a contradiction with the initial definition of the sequence \((f_n)\). 
This final argument depends on the nature of \(X\). 
If \(X\) is not horospherical then all the functions \(f_n\) are non decreasing, hence \(f_{\infty}\) and \(h\) as well. 
If the slope of \(h\) is strictly positive, then \(\mathcal{L}(h)>0\) by assumption which provides the contradiction. 
Else \(h\) is constant. 
Since \(f_{\infty}\) is non decreasing, \(f_{\infty}\) is constant on \([s_-,t_+]\). 
All \(f_n\) satisfy \(\inf f_n = f_n(s_-) = 0\), so \(f_{\infty}(s_-)=0\). 
But then either \(f_{\infty} \equiv 0\), which contradicts \(\lVert f_{\infty} \rVert = 1\), or \(\mathcal{L}(f_{\infty})>0\), which is another contradiction. %bigger than h only where contribution is positive

We now conclude the horospherical case. 
Assume first that \(f_{\infty}\) is affine.
All the functions \(f_n\) satisfy \(f_n(s_0)=\inf f_n =0\), hence \(f_{\infty}\) as well. Since \(s_-<s_0<s_+\), this shows that \(f_{\infty}\) is the zero function, a contradiction with \(\int f_{\infty}P = 1\). 
Assume now that \(f_{\infty}\) is not affine, hence that \(f_{\infty}\neq h\). 
Since \(aP-Q\) is a non-zero polynomial, by considering the positive and negative contribution in \(\mathcal{L}\) as before, we see that \(\mathcal{L}(f_{\infty})>\mathcal{L}(h)=0\). 
This is the final contradiction. 
\end{proof}

%%%%%%%%%%%%%%%%%%%%%%%%%%%%%%%%%%%%%%%%%%%%%%%
%%%%%%%%%%%%%%%%%%%%%%%%%%%%%%%%%%%%%%%%%%%%%%%

\section{Examples}
\label{sec_examples}

\subsection{An example of Kähler class with no extremal Kähler metrics}

We will here consider an example initially encountered in \cite{KRFSS}. 
There, we considered as an ingredient of the proof the existence of Kähler-Einstein metrics on some blow-down of the \(G_2\)-stable divisors in the wonderful compactification of \(G_2/\SO_4\). 
Such varieties are rank one spherical (horosymmetric) varieties, Fano with Picard rank one, and one of these does not admit (singular) Kähler-Einstein metrics. 
If we go back to the corresponding \(G_2\)-stable divisor in the wonderful compactification of \(G_2/\SO_4\), which is smooth, this should provide an example of cohomogeneity one manifold and Kähler classes on it with no extremal Kähler metrics. 
We verify this in the following paragraphs. 

\subsubsection{Recollection on the group \(G_2\)}

We consider the exceptional group \(G_2\) with a fixed choice of Borel subgroup \(B\) and maximal torus \(T\), and an ordering of simple roots as in Bourbaki's numbering, so that \(\alpha_1\) is the short root and \(\alpha_2\) is the long root. 

Up to scaling, the Weyl group invariant scalar product on \(\mathfrak{X}(T)\) satisfies \(\langle \alpha_1, \alpha_1\rangle = 2\), \(\langle \alpha_1, \alpha_2\rangle = -3\) and \(\langle \alpha_2, \alpha_2\rangle = 6\).
The fundamental weight for \(\alpha_1\) is \(2\alpha_1+\alpha_2\) and the fundamental weight for \(\alpha_2\) is \(3\alpha_1+2\alpha_2\). 
The positive roots and their scalar product with an arbitrary element \(x_1\alpha_1+x_2\alpha_2\) read 
\begin{align*}
\langle \alpha_1, x_1\alpha_1+x_2\alpha_2 \rangle &= 2x_1-3x_2 \\
\langle \alpha_2, x_1\alpha_1+x_2\alpha_2 \rangle &= 3(-x_1+2x_2) \\
\langle \alpha_1+\alpha_2, x_1\alpha_1+x_2\alpha_2 \rangle &= -x_1+3x_2 \\
\langle 2\alpha_1+\alpha_2, x_1\alpha_1+x_2\alpha_2 \rangle &= x_1 \\
\langle 3\alpha_1+\alpha_2, x_1\alpha_1+x_2\alpha_2 \rangle &= 3(x_1-x_2) \\
\langle 3\alpha_1+2\alpha_2, x_1\alpha_1+x_2\alpha_2 \rangle &= 3x_2 
\end{align*}
The half-sum of positive roots is \(\varpi=5\alpha_1+3\alpha_2\). 

\subsubsection{The facet of the wonderful compactification of \(G_2/\SO_4\) and its Kähler classes}

Let \(P_1\) be the parabolic subgroup of \(G_2\) containing \(B\) such that \(-\alpha_1\) is not a root of \(P_1\). 
Its Levi factor has adjoint form \(\PSL_2\). 
Let \(X\) be the (non-singular) horosymmetric variety obtained by parabolic induction from the \(P_1\)-variety \(\bbP^2\), considered as the projectivization of the space of equations of quadrics in \(\bbP^1\) on which \(P_1\) acts via the natural action of \(\PSL_2\) on \(\bbP^1\). 
Its open orbit \(G_2/H\) is the corresponding parabolic induction from \(\PSL_2/\PSO_2\). 
Note that \(X\) is the wonderful compactification of \(G_2/H\) and that since it is a parabolic induction, \(\Aut^0(X)=G_2\) \cite[Proposition~3.4.1]{Pez09}. 

The variety \(X\) is a Picard rank two horosymmetric variety. 
Its spherical root is \(\sigma = 2\alpha_2\), and its spherical lattice \(M\) is the lattice generated by \(\sigma\). 
We can describe its Kähler cone by using \cite{Bri89}, recalled for the special case of horosymmetric varieties in \cite{DelHoro,DH}. 
Any Kähler class on a projective spherical variety is the class of a real divisor. 
The vector space of classes of real divisor is generated by the classes of all prime \(B\)-stable divisors modulo the relations imposed by \(B\)-semi-invariant rational functions. Here the prime \(B\)-stable divisors are the closure \(E\) of the unique codimension one \(G_2\)-orbit (obtained by parabolic induction from the space of degenerate quadrics in \(\bbP^1\), that is, points), and the closures of the two colors \(D_1\) and \(D_2\) in \(G/H\), where \(D_1\) is the only codimension one \(B\)-orbit not stable under \(P_1\) (the codimension one \(P_2\)-orbit obtained by moving the unique color in \(\bbP^2\)), and \(D_2\) is the only codimension one \(B\)-orbit not stable under \(P_2\). 
Note that \(D_1\) is also the pull-back of the ample generator of the Picard group of \(G/P_1\). 
Since the spherical rank of \(G/H\) is one, there is a single relation to consider, which amounts to \(2D_2-E-6D_1=0\), since the image of \(D_1\) by the color map is the restriction \(-6\sigma^*\) of the coroot \(\alpha_1^{\vee}\) to \(M\otimes \bbR\), and the image by the color map of \(D_2\) is the restriction \(2\sigma^*\) of the positive restricted coroot \(\frac{\alpha_2^{\vee}}{2}\) (the image of \(E\) is the primitive generator of the valuation cone \(-\sigma^*\)). 

In view of the above presentation, we can write a real divisor as \(sE+s_1D_1\). 
Since K-stability is invariant under scaling of the Kähler class, we may as well assume \(s_1=6\). 
Brion's ampleness criterion for the real line bundle \(sE+6D_1\) translates simply to the condition \(0<s<1\), and the moment polytope is then 
\[ \Delta_+(s) = 6(2\alpha_1+\alpha_2) + \{2t\alpha_2 \mid 0\leq t\leq s \}. \]

\subsubsection{K-stability condition}

We have 
\[ P(t) = \frac{288}{5}t(1-t^2)(9-t^2) \]
and
\begin{align*} 
Q(t) & = \left(\frac{1}{6(1-t)} + \frac{1}{4t} + \frac{2}{3(1+t)} + \frac{5}{12} + \frac{1}{3-t} + \frac{3}{2(3+t)}\right) P(t) \\
& = \frac{24}{5}(5t^5+15t^4-150t^3-90t^2+225t+27).
\end{align*}
To avoid computing the quantity \(a\) independently, we rewrite the K-stability condition as 
\begin{equation} 
\label{Kstabcondition}
\left(s P(s) + 2\int_0^s tQ(t)\mathop{dt}\right)\int_0^s P(t)\mathop{dt} - 
\left(P(s) + 2\int_0^s Q(t)\mathop{dt}\right)\int_0^s t P(t)\mathop{dt}
> 0 
\end{equation}
The left hand side above is the polynomial 
\[ R(s):=\frac{1152}{175}s^4(11s^8+20s^7-348s^6-240s^5+3123s^4+1260s^3-9072s^2+5103). \]
One can plug in specific values to check that 
\[ R\left(\frac{1}{2}\right)= \frac{7315083}{5600} >0 \] 
and 
\[ R\left(\frac{98}{100}\right)= -\frac{12097691278181901659043}{47683715820312500000} <0. \]
In other words, there are Kähler classes on \(X\) with cscK metrics and Kähler classes with no cscK metrics. 
Since \(\Aut^0(X)=G_2\) is semisimple, a Kähler class with no cscK metrics does not admit any extremal Kähler metric either. 
Using numerical approximation, one can be more precise: the Kähler class \(sE+6D_1\) contains a cscK metric if and only if \(s<s_0\), where \(s_0\simeq 0.97202\). 

\subsection{Strong Calabi dream manifolds of cohomogeneity one, and an answer to a question of Kanemitsu}

We will now provide examples of cohomogeneity one manifolds which are not horospherical and are strong Calabi Dream manifolds in the sense of \cite{JesusMartinezGarcia}. 
We take a small detour and choose slightly complicated manifolds to answer along the way a question of Kanemitsu \cite[Remark~4.1]{Kan} on the existence of Kähler-Einstein metrics on some cohomogeneity one manifolds with Picard rank one. 

By Pasquier's classification \cite{Pas09}, there are two Picard rank one, non-horospherical cohomogeneity one manifolds, one acted upon by \(\PSL_2\times G_2\), that we will denote by \(\varSLG\), and one acted upon by \(F_4\), that we will denote by \(\varF\). 
Both are two orbit varieties with semisimple automorphism group. 
More precisely, \(\Aut^0(\varSLG)=\PSL_2\times G_2\) and \(\Aut^0(\varF)=F_4\). 
We will first prove:

\begin{thm}
There exist Kähler-Einstein metrics on \(\varSLG\) and \(\varF\). 
\end{thm}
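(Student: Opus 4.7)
The plan is to deduce the theorem from Theorem~\ref{thm_main} applied to the anticanonical polarization. Both $\varSLG$ and $\varF$ are smooth Fano non-horospherical rank one spherical varieties (by Pasquier's classification), so $(X,-K_X)$ falls into case (2) of Theorem~\ref{thm_KSSV2}, and by the remark immediately after Theorem~\ref{thm_main} it admits a unique $G$-equivariant special test configuration. Existence of a Kähler-Einstein metric then reduces, via Theorem~\ref{thm_YTD}, to verifying that the Donaldson-Futaki invariant of this single test configuration is strictly positive, i.e.\ to the inequality $\mathcal{L}(g_0)>0$ for one explicit non-decreasing piecewise linear convex test function $g_0$.

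First I would assemble the combinatorial data in each case. For $\varSLG$, identify the spherical homogeneous open orbit $G/H$ (a parabolic induction involving the $G_2$-symmetric space $G_2/\SO_4$ twisted by the $\PSL_2$-factor, in the sense of Akhiezer's list), determine the spherical root $\sigma$, and use Brion's description of the anticanonical divisor of a spherical variety (as the sum of all colors weighted by $\langle 2\varpi,\cdot\rangle$ plus the $G$-stable prime divisor) to pin down the moment polytope $\Delta_+=\chi+[s_-,s_+]\sigma$. Then list the positive roots of $\PSL_2\times G_2$ that do not vanish on $\Delta_+$ to form $R_X^+$, and write down the polynomials
\[
P(t)=\prod_{\alpha\in R_X^+}\frac{\langle\alpha,\chi+t\sigma\rangle}{\langle\alpha,\varpi\rangle},\qquad Q(t)=\Bigl(\sum_{\alpha\in R_X^+}\frac{\langle\alpha,\varpi\rangle}{\langle\alpha,\chi+t\sigma\rangle}\Bigr)P(t).
\]
The pairings are already tabulated for $G_2$ in Section~\ref{sec_examples}, and the $\PSL_2$-factor contributes only one extra linear factor, so the polynomials are entirely explicit. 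The same procedure applies to $\varF$, starting from the primitive affine case $(F_4,\SO_9)$.

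Next I would perform the verification of K-stability. By Theorem~\ref{thm_main} in the non-horospherical case it suffices to test one function, which up to positive scaling and addition of an affine function I take to be $g_0(t)=t-s_-$. Determine $a$ from $\mathcal{L}(1)=0$, equivalently
\[
P(s_-)+P(s_+)=2\int_{s_-}^{s_+}\bigl(aP(t)-Q(t)\bigr)\,dt,
\]
and compute $\mathcal{L}(g_0)=(s_+-s_-)P(s_+)-2\int_{s_-}^{s_+}(t-s_-)(aP(t)-Q(t))\,dt$. Since all ingredients are rational polynomials this reduces to a single rational-number positivity check, in spirit identical to the computation (\ref{Kstabcondition}) above.

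The main obstacle is bookkeeping for $\varF$, where $F_4$ has $24$ positive roots and $R_X^+$ remains large, making $P$ a polynomial of comparatively high degree and $Q$ a sum of twelve-or-so rational pieces times $P$; the computation is mechanical but error-prone. For $\varSLG$ the combinatorics is much closer to what was already carried out for the $G_2$-example, so the verification there is routine. I expect both inequalities to hold with comfortable slack, as is typical for Fano spherical varieties satisfying Delcroix's barycenter criterion, which provides an independent sanity check on the computation.
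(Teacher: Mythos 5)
Your overall reduction is sound and parallels the paper's framework: both varieties are non-horospherical rank one spherical Fano manifolds, so by Theorem~\ref{thm_main} and the remark following it, $G$-uniform K-stability (hence, via Theorem~\ref{thm_YTD} and Odaka's appendix, a cscK metric in $c_1(X)$, which for the anticanonical class is K\"ahler-Einstein) reduces to the positivity of a single explicit quantity attached to the unique special equivariant test configuration. Note, though, that the paper takes a deliberately shorter computational route here: it invokes the Fano criterion of \cite{DelKSSV} directly, for which only the polynomial $P$ up to scale is needed (the condition becomes $\int (t-t_0)P(t)\,dt>0$ for an explicit $t_0$), avoiding the computation of $Q$ and of the normalizing constant $a$ that your plan via $\mathcal{L}(g_0)$ requires. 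Your route is equivalent but heavier.

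The genuine problem is in your identification of the combinatorial data, which is the entire substance of the proof. You propose to realize $\varSLG$ using the symmetric space $G_2/\SO_4$ twisted by the $\PSL_2$-factor; but $G_2/\SO_4$ has rank \emph{two} and cannot be the primitive rank-one piece (you have conflated this example with the earlier one in Section~\ref{sec_examples}). The open orbit of $\varSLG$ is the parabolic induction from the group-type rank one symmetric space $\PSL_2\times\PSL_2/\PSL_2$ via the parabolic $P_2\subset\PSL_2\times G_2$ whose Levi is generated by $\alpha_0$ and the short root $\alpha_1$; the spherical root is $\alpha_0+\alpha_1$ and the anticanonical moment polytope is the segment in that direction through $\alpha_0+10\alpha_1+6\alpha_2$. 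Likewise, you propose to start from the primitive affine case $(F_4,\SO_9)$ for $\varF$, but $\varF$ is not an embedding of that homogeneous space: its open orbit is the parabolic induction from $\Sp_6/\Sp_2\times\Sp_4$ via $P_1\subset F_4$, with spherical root $\alpha_2+2\alpha_3+\alpha_4$. Starting from the wrong homogeneous spaces you would obtain the wrong set $R_X^+$, the wrong moment polytope and the wrong $P$, so the positivity check you describe would not establish the statement. For the record, the correct inequalities are
\[ \int_{-1}^{2} t\,(1+t)^2(2-t)(8-t)(10+t)(4+t)\,dt=\tfrac{120285}{56}>0 \]
for $\varSLG$ and $\int_0^8 (t-5)\,t^7(256-t^2)^2(64-t^2)^2\,dt>0$ for $\varF$.
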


As a corollary, we recover a result of \cite{Kan}.

\begin{cor}
The tangent bundles of \(\varSLG\) and \(\varF\) are Mumford-Takemoto stable. 
\end{cor}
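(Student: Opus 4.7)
The plan is to deduce the corollary from the theorem on existence of Kähler–Einstein metrics via the classical bridge between Einstein metrics and slope stability of the tangent bundle. The first step is to invoke the Kobayashi–Hitchin correspondence: on a Fano manifold, a Kähler–Einstein metric induces a Hermite–Einstein metric on the tangent bundle (with respect to the underlying Kähler form, up to rescaling), and a Hermite–Einstein bundle is slope-polystable with respect to the corresponding Kähler class. Since $\varSLG$ and $\varF$ have Picard rank one, the choice of polarization is essentially unique and slope-polystability with respect to $-K_X$ is what Mumford–Takemoto stability refers to. Hence $T_{\varSLG}$ and $T_{\varF}$ are slope-polystable.

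The second step is to upgrade polystability to stability. If $T_X$ were merely polystable, it would split as a non-trivial direct sum of proper stable subbundles, corresponding to a parallel orthogonal decomposition of the tangent bundle under the Levi-Civita connection of the KE metric. By the de Rham decomposition theorem, a finite isometric cover of $X$ would then split as a Riemannian (hence Kähler) product. Since Fano manifolds are simply connected by the classical theorem of Kobayashi, $X$ itself would be a non-trivial product $X = X_1 \times X_2$, which would force the Picard rank to be at least two. This contradicts the assumption that $\varSLG$ and $\varF$ have Picard rank one, so the tangent bundles must actually be slope-stable.

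The main obstacle is purely one of bookkeeping: all of the analytic ingredients (Kobayashi–Hitchin, the holonomy characterization of splittings of Hermite–Einstein bundles, de Rham decomposition, simple connectedness of Fano manifolds) are classical, so once the existence of KE metrics has been settled in the preceding theorem, the corollary follows by assembling them. Kanemitsu's argument in \cite{Kan} bypasses Kähler–Einstein metrics entirely and works directly with potential destabilizing subsheaves, but the route through differential geometry is the most natural given the preceding result.
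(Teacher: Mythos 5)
Your argument is correct and is exactly the standard deduction the paper implicitly relies on (the paper states the corollary without proof, as a recovery of Kanemitsu's result): the Kähler--Einstein metric makes the tangent bundle Hermite--Einstein, hence slope-polystable for the unique polarization, and a proper splitting would be parallel for the Levi-Civita connection, forcing a product decomposition via de Rham and simple connectedness, contradicting Picard rank one. Nothing is missing.
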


These manifolds \(\mathbb{X}_i\) each admit a unique \emph{discoloration} \(\widetilde{\mathbb{X}_i}\) which is a smooth projective Picard rank two cohomogeneity one manifold which surjects equivariantly to \(\mathbb{X}_i\) and where the complement of the open orbit is of codimension one. 
We will apply Theorem~\ref{thm_YTD} to obtain:

\begin{thm}
The manifolds \(\widetilde{\varSLG}\) and  \(\widetilde{\varF}\) admits a cscK metric in all Kähler classes. 
In other words, they are \emph{Calabi dream manifolds} in the terminology of \cite{ChenChengII}, and more precisely \emph{strong Calabi dream manifolds} in the terminology of \cite{JesusMartinezGarcia}.
\end{thm}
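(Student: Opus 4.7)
The strategy is to apply Theorem~\ref{thm_YTD}. Both $\widetilde{\varSLG}$ and $\widetilde{\varF}$ are smooth projective cohomogeneity one manifolds, hence rank one spherical manifolds by Section~\ref{sec_recollections}, and they are non-horospherical because the $\mathbb{X}_i$ themselves are non-horospherical. The assumption that the complement of the open orbit has codimension one translates to the colored fan consisting of $\{0\}$ together with a single uncolored one-dimensional cone generated by the dual $\sigma^*$ of the spherical root. In this non-horospherical setting, in view of Theorem~\ref{thm_KSSV2}(2) and of the remark following Theorem~\ref{thm_main}, K-stability with respect to special $G$-equivariant test configurations reduces to the positivity of $\mathcal{L}(g_0)$ for a single non-decreasing piecewise linear convex function, namely $g_0(t)=t$, associated to the unique $G$-equivariant special test configuration. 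It thus suffices to verify this single combinatorial inequality for every class in the two-dimensional Kähler cone of $\widetilde{\mathbb{X}_i}$.

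Concretely, for each $i$ I would first assemble the combinatorial data: the group $G$ (here $\PSL_2\times G_2$, respectively $F_4$), the spherical root $\sigma$, the positive roots together with their half-sum $\varpi$, the colors and the unique $G$-stable prime divisor. Using Brion's description of the Picard group as in the preceding subsection, I would parametrize the real Néron--Severi space by the classes of two prime $B$-stable divisors modulo the single relation imposed by the color map, obtain a linear description of the Kähler cone and of the corresponding moment polytopes $\Delta_+$, and read off the endpoints $s_\pm$ as piecewise linear functions of the two Kähler parameters. This produces fully explicit rational expressions for $P(t)$ and $Q(t)$, in the same spirit as the $G_2/\SO_4$ computation above. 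Then $\mathcal{L}(g_0)$ can be rewritten, analogously to~\eqref{Kstabcondition}, as an integral expression in the Kähler parameters whose sign has to be controlled over the entire Kähler cone.

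The main obstacle is precisely this positivity check. By homogeneity of $\mathcal{L}$ in the Kähler class, one of the two parameters can be normalized away, reducing the problem to the positivity of a single explicit univariate polynomial $R(s)$ on the open interval that parametrizes the Kähler cone. In contrast to the example of the previous subsection, where $R$ changed sign, here I expect $R(s)>0$ throughout the interval; this could be established by factoring out the obvious non-negative monomial and showing that the residual polynomial has no real root on the interval, either via a Sturm sequence analysis or by exhibiting an explicit sum-of-squares representation on that interval. The $F_4$ case will be significantly heavier computationally, since there are many more positive roots contributing to $P$ and $Q$, but the argument is structurally identical to the one for $\widetilde{\varSLG}$.

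Once $\mathcal{L}(g_0)>0$ is verified throughout the Kähler cone for both manifolds, Theorem~\ref{thm_YTD} yields a cscK metric in every rational Kähler class; the density of rational classes in the Kähler cone together with the openness of the extremal (hence cscK, since $\Aut^0$ is semisimple) cone, already invoked in the proof of Theorem~\ref{thm_YTD}, then extends the conclusion to every Kähler class, which is exactly the strong Calabi dream property in the sense of \cite{JesusMartinezGarcia}.
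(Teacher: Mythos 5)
Your proposal follows essentially the same route as the paper: identify the discolorations as rank one non-horospherical (horosymmetric) varieties of Picard rank two, describe the K\"ahler cone and moment polytopes via Brion's ampleness criterion, reduce K-stability to the single condition~\eqref{Kstabcondition} coming from the unique \(G\)-equivariant special test configuration, normalize one K\"ahler parameter, and check positivity of the resulting univariate polynomial on the interval parametrizing the cone. The only difference is that you defer the explicit computation of \(P\), \(Q\) and the polynomial \(R(s)\) (which is the actual content of the paper's verification, carried out explicitly for \(\widetilde{\varSLG}\) and asserted as ``tedious but workable'' for \(\widetilde{\varF}\)), and note that the minor final step passing from rational to arbitrary K\"ahler classes is already built into Theorem~\ref{thm_YTD} rather than needing to be redone here.
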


We will in the paragraphs to follow provide the combinatorial data associated to the manifolds under study. 
It is rather easy since these are horosymmetric. 
For the discolorations, we will then determine the Kähler classes and compute the K-stability condition as in the previous example. 
For the Kähler-Einstein metrics, it is a bit faster to use directly the criterion in \cite{DelKSSV} since one needs only the polynomial \(P\) up to scalar, and the polynomial \(Q\) is not needed. 

\subsubsection{Kähler-Einstein metrics on \(\varSLG\)}

Let \(G\) denote the group \(\PSL_2\times G_2\). 
We fix a choice of Borel subgroup \(B\) and of maximal torus \(T\subset B\). 
Let \(\alpha_0\) denote the positive root of \(\SL_2\) and let \(\alpha_1\) and \(\alpha_2\) denote the simple roots of \(G_2\), numbered so that \(\alpha_1\) is the short root (in accordance with Bourbaki's standard numbering and with the previous example). 
We can choose a Weyl group invariant scalar product on \(\mathfrak{X}(T)\) satisfying \(\langle \alpha_0, \alpha_0\rangle = 1\) and the same scaling as in the previous example for the restriction to \(G_2\). 
Of course, the root \(\alpha_0\) is orthogonal to \(\alpha_1\) and \(\alpha_2\). 

It follows from the description of the variety \(\varSLG\) in \cite{Pas09} that its open orbit \(G/H\) is obtained by parabolic induction from the rank one symmetric space \(\PSL_2\times\PSL_2/\PSL_2\), where the parabolic subgroup of \(G\) is the parabolic \(P_2\) associated to the long root \(\alpha_2\), whose Levi factor has adjoint form \(\PSL_2\times\PSL_2\). 
The spherical lattice \(M\) for \(\varSLG\) is thus the lattice generated by \(\alpha_0+\alpha_1\). 

Furthermore, the variety \(\varSLG\) is the unique fully colored compactification of \(G/H\). 
It follows that the moment polytope \(\Delta_+\) corresponding to the anticanonical line bundle is the intersection with the positive Weyl chamber of the affine line with direction \(\bbR(\alpha_0+\alpha_1)\) passing through the sum of positive roots \( \alpha_0 + 10 \alpha_1 + 6 \alpha_2 \). 
If we write the moment polytope as 
\[ \Delta_+ = \{ (1+t)\alpha_0+(10+t)\alpha_1)+6\alpha_2 \mid u \leq t \leq v \} \]
then we can determine \(u\) and \(v\) as the extreme values of \(t\) such that 
\(\langle \alpha_i, (1+t)\alpha_0+(10+t)\alpha_1)+6\alpha_2\rangle \geq 0\) for \(i\in\{0,1,2\}\), that is, \(u=-1\) and \(v=2\).  

We may finally compute the K-stability condition, which is 
\[ \int_{-1}^2 t (1+t)^2(2-t)(8-t)(10+t)(4+t)\mathop{dt} >0. \]
By direct computation, the integral is equal to \(\frac{120285}{56}\) hence the condition is satisfied.

\subsubsection{CscK metrics on the discoloration \(\widetilde{\varSLG}\)}

The discoloration \(\widetilde{\varSLG}\) of \(\varSLG\) is obtained by the following parabolic induction procedure. 
Take the quotient of \(\bbP^3 \times G\) by the diagonal action of the parabolic \(P_2\), where the action on \(G\) is obvious and the action on \(\bbP^3\) is induced by the action of the Levi factor of \(P_2\) and the obvious structure of two-orbit \(\PGL_2\times\PGL_2\)-variety on \(\bbP^3\) seen as the projectivization of \(2\) by \(2\) matrices. 
% remark: The closed orbit in \bbP^3 is the projectivization of rank one matrices. 
It is a homogeneous \(\bbP^3\)-bundle over the generalized flag manifold \(G/P_2\).

It is a rank one horosymmetric variety with Picard rank two. 
We can describe its Kähler cone by using \cite{Bri89}, recalled for the special case of horosymmetric varieties in \cite{DelHoro,DH}. 
Any Kähler class on a projective spherical variety is the class of a real divisor. 
The vector space of classes of real divisor is generated by the classes of all prime \(B\)-stable divisors modulo the relations imposed by \(B\)-semi-invariant rational functions. Here the prime \(B\)-stable divisors are the (\(G\)-stable) exceptional divisor \(E\), and the closures of the two colors \(D_{01}\) and \(D_2\) in \(G/H\), where \(D_{01}\) is the only codimension one \(B\)-orbit not stable under \(P_{0}\) and \(P_{1}\), and \(D_2\) is the only codimension one \(B\)-orbit not stable under \(P_2\). 
Note that \(D_2\) is also the pull-back of the ample generator of the Picard group of \(G/P_2\). 
Since the spherical rank of \(G/H\) is one, there is a single relation to consider, which amounts to \(E+D_2-2D_1=0\), since the image of \(D_2\) by the color map is the restriction of the coroot \(\alpha_2^{\vee}\) to \(M\otimes \bbR\) which coincides with the generator of the colorless ray corresponding to \(E\), and the image by the color map of \(D_{01}\) is the only positive restricted coroot, the restriction of \(\frac{1}{2}(\alpha_0^{\vee}+\alpha_1^{\vee})\) to \(M\otimes \bbR\). 

The class of any real divisor is thus represented by a \(s_E E+s_2D_2\) for \(s_E\) and \(s_2\) two real numbers. 
By Brion's ampleness criterion, it is a Kähler class if and only if \(0<s_E<s_2\), and the moment polytope is then 
\[ \Delta_+(s_E,s_2) := s_2(3\alpha_1+2\alpha_2) + \{ t(\alpha_0+\alpha_1)\mid 0 \leq t \leq s_E \}, \]
where \(3\alpha_1+2\alpha_2\) is to be though of as the fundamental weight of \(\alpha_2\) here.  

\begin{figure}
\centering
\caption{Moment polytope \(\Delta_+(s,1)\)}
\label{fig_polSLG}
\begin{tikzpicture}  
\draw [dashed] (0,0) -- (0,3);
\draw [dotted] (0,0) -- (3,0);
\draw [dashed] (0,0) -- (3,3);
\draw [thick] (0,2) -- (1,2);
\draw [dotted] (0,2) -- (2,2);
\draw (0,2) node{$\bullet$};
\draw (0,2) node[below left]{{\scriptsize $3\alpha_1+2\alpha_2$}};
\draw (2,0) node{$\bullet$};
\draw (2,0) node[above]{{\scriptsize $\alpha_0+\alpha_1$}};
\draw (1,0) node{$\bullet$};
\draw (1,0) node[below]{{\scriptsize $s(\alpha_0+\alpha_1)$}};
\draw (1,2) node{$\bullet$};
\end{tikzpicture}
\end{figure}
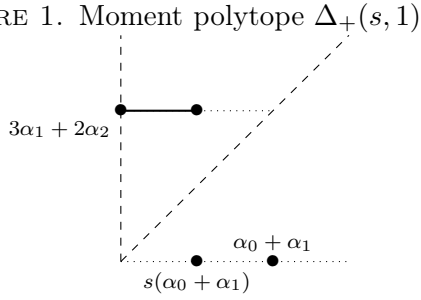

We may now compute the unique condition for K-stability given by an equivariant special test configuration for these polarizations. 
Since a Kähler class is K-stable if and only if one of its positive multiple is, we can assume \(s_2=1\) and write \(s:=s_E\) in the following, to simplify notations. 
Let \(P\) and \(Q\) denote the polynomials 
\begin{align*}
P(t) &= \frac{t^2}{15}\left( t^4 - 10 t^2 + 9 \right) \\
Q(t) &= \frac{t}{30}\left( 3t^5 + 6t^4 - 90t^3 - 40t^2 +135t +18 \right)
\end{align*}
The Kähler class \(s E + D_2\) is \(G\)-uniformly K-stable if and only if \(s\) satisfies the condition~\eqref{Kstabcondition}.
By computing the polynomial on the left hand side, the condition is 
\[ \frac{s^6}{132300}(9s^8+42s^7-266s^6-378s^5+2135s^4+1764s^3-5292s^2+2646) > 0, \]
and one can check that this condition is satisfied for all \(s\in ]0,1[\). 

\subsubsection{Kähler-Einstein metrics on the \(F_4\)-variety \(\varF\)}

Let \(\alpha_i\) denote the simple roots of \(F_4\), ordered in accordance with Bourbaki's numbering.
%, that is, as indicated on the Dynkin diagram \dynkin[label,ordering=Bourbaki]F4. 
Up to scaling, the Weyl group invariant scalar product is such that the matrix of \(\langle \alpha_i,\alpha_j \rangle\) is given by 
\[ 
\begin{pmatrix}
2 & -1 & 0 & 0 \\
-1 & 2 & -1 & 0 \\
0 & -1 & 1 & -\frac{1}{2} \\
0 & 0 & -\frac{1}{2} & 1
\end{pmatrix}
\]

It follows from the description of the variety \(\varF\) in \cite{Pas09}
that its open orbit \(F_4/H\) is obtained by parabolic induction from the rank one symmetric space \(\Sp_6/\Sp_2\times \Sp_4\), where the parabolic subgroup of \(F_4\) is the parabolic \(P_1\) associated to the root \(\alpha_1\). 
The spherical lattice \(M\) for \(\varF\) is the lattice generated by the restricted root of the symmetric space, \( \beta := \alpha_2+2\alpha_3+\alpha_4 \). 

Furthermore, the variety \(\varF\) is the unique fully colored compactification of \(F_4/H\). 
It follows that the moment polytope corresponding to the anticanonical line bundle is the intersection with the positive Weyl chamber of the affine line with direction \(\bbR\beta \) passing through \( 16\alpha_1 + 29\alpha_2 + 42\alpha_3 +21\alpha_4 \), the sum of positive roots minus the sum of positive roots of \(\Sp_6\) fixed by the involution defining the symmetric space.   
More explicitly, the moment polytope is 
\[ \Delta_+ = \{ t\beta+8\omega_1 \mid 0\leq t\leq 8 \} \]
where \(\omega_1 = 2\alpha_1+3\alpha_2+4\alpha_3+2\alpha_4\) is the fundamental weight for \(\alpha_1\). 

The K-stability condition for the anticanonical line bundle thus reads 
\[ \int_0^8 (t-5)t^7(256-t^2)^2(64-t^2)^2\mathop{dt} >0. \]
This condition is satisfied since the left hand side is equal to 
\[ \frac{3672386428957884416}{153153}. \] 

\subsubsection{CscK metrics on the discoloration \(\widetilde{\varF}\)}

The discoloration \(\widetilde{\varF}\) of \(\varF\) is obtained by the following parabolic induction procedure. 
Take the quotient of \(\grass(2,6)\times F_4\) by the diagonal action of the minimal parabolic \(P_1\), where the action on \(F_4\) is obvious and the action on \(\grass(2,6)\) is induced by the action of the Levi factor of \(P_1\) and the structure of two-orbit \(\Sp_6\)-variety on \(\grass(2,6)\) (this is the wonderful compactification of the symmetric space \(\Sp_6/\Sp_2\times\Sp_4\)). 
%remark: the closed \(\Sp_6\)-orbit in \grass(2,6) is the GFM associated to the maximal parabolic P_2 in Bourbaki's numbering. 

It is a rank one horosymmetric variety with Picard rank two. 
Again, its Kähler cone is determined from combinatorial data using \cite{Bri89,DelHoro,DH}. 
Here, the vector space of real divisors is the quotient of the three dimensional vector space generated by the exceptional divisor \(E\) and the closure of two colors \(D_1\) and \(D_3\), where \(D_i\) is the closure of the only codimension one \(B\)-orbit not stable under the minimal parabolic \(P_i\) where \(i\in\{1,3\}\), by the relation \( D_1 + E -2 D_3 = 0 \).
The relation follows from the fact that the image of \(D_1\) under the color map is the restriction of the coroot \(\alpha_1^{\vee}\) to \(M\otimes \bbR\), which coincides with the primitive generator of the colorless ray corresponding to the \(F_4\)-orbit \(E\), and the image by the color map of \(D_3\) is the only positive restricted coroot, equal to (the restriction to \(M\otimes\bbR\) of) \((\alpha_2+2\alpha_3+\alpha_4)^{\vee}\), which coincides with the double of the opposite of the generator of \(E\). 
Note that \(D_1\) is the pull-back of the ample generator of the Picard group of \(F_4/P_1\). 

The class of any real divisor is thus represented, up to multiple, by some \(s E + D_1\). 
By Brion's ampleness criterion, it is a Kähler class if and only if 0<s<1, 
and the moment polytope is then 
\[ \Delta_+ = \{ t\beta + \omega_1 \mid 0 \leq t \leq s \}. \]
The Kähler class \(sE+D_1\) is \(F_4\)-uniformly K-stable if and only if \(s\) satisfies the condition~\eqref{Kstabcondition}
where, here, the polynomials \(P\) and \(Q\) are given by 
\begin{align*}
P(t) & = \frac{1}{2^7}t^7(4-t^2)^2(1-t^2)^2\\
Q(t) & = \frac{1}{2^8}t^6(4-t^2)(1-t^2)(13t^5+22t^4-105t^3-110t^2+116t+88).
\end{align*}
It is a tedious but workable task to verify that the polynomial on the left hand side of condition~\eqref{Kstabcondition} is positive for \(s\in ]0,1[\). 

\bibliographystyle{alpha}
\bibliography{RK1}

\begin{thebibliography}{{Del}20c}

\bibitem[Ahi83]{Akh}
Dmitry Ahiezer.
\newblock Equivariant completions of homogeneous algebraic varieties by
  homogeneous divisors.
\newblock {\em Ann. Global Anal. Geom.}, 1(1):49--78, 1983.

\bibitem[BD19]{KRFSS}
Olivier Biquard and Thibaut Delcroix.
\newblock {Ricci flat {K}\"{a}hler metrics on rank two complex symmetric
  spaces}.
\newblock {\em J. \'{E}c. polytech. Math.}, 6:163--201, 2019.

\bibitem[BDL20]{BDL20}
Robert Berman, Tamas Darvas, and Chinh Lu.
\newblock {Regularity of weak minimizers of the K-energy and applications to
  properness and K-stability}.
\newblock {\em Ann. Sci. Éc. Norm. Supér. (4)}, 2(53):267--289, 2020.

\bibitem[Bri89]{Bri89}
Michel Brion.
\newblock Groupe de {P}icard et nombres caract\'{e}ristiques des
  vari\'{e}t\'{e}s sph\'{e}riques.
\newblock {\em Duke Math. J.}, 58(2):397--424, 1989.

\bibitem[Cal82]{Cal82}
Eugenio Calabi.
\newblock Extremal {K}\"{a}hler metrics.
\newblock In {\em Seminar on {D}ifferential {G}eometry}, volume 102 of {\em
  Ann. of Math. Stud.}, pages 259--290. Princeton Univ. Press, Princeton, N.J.,
  1982.

\bibitem[CC18]{ChenChengII}
Xiuxiong {Chen} and Jingrui {Cheng}.
\newblock {On the constant scalar curvature K\"ahler metrics, existence
  results}.
\newblock \texttt{arXiv:1801.00656}, 2018.

\bibitem[CF03]{Cup03}
St\'{e}phanie Cupit-Foutou.
\newblock Classification of two-orbit varieties.
\newblock {\em Comment. Math. Helv.}, 78(2):245--265, 2003.

\bibitem[Del20a]{DelKSSV}
Thibaut Delcroix.
\newblock {K-stability of Fano spherical varieties}.
\newblock {\em Ann. Sci. Éc. Norm. Supér. (4)}, 3(53):615--662, 2020.

\bibitem[Del20b]{DelHoro}
Thibaut Delcroix.
\newblock K\"{a}hler geometry of horosymmetric varieties, and application to
  {M}abuchi's {K}-energy functional.
\newblock {\em J. Reine Angew. Math.}, 763:129--199, 2020.

\bibitem[{Del}20c]{KSSV2}
Thibaut {Delcroix}.
\newblock {Uniform K-stability of polarized spherical varieties}.
\newblock \texttt{arXiv:2009.06463}, 2020.

\bibitem[DH]{DH}
Thibaut {Delcroix} and Jakob {Hultgren}.
\newblock {Coupled complex Monge-Ampère equations on Fano horosymmetric
  manifolds}.
\newblock \texttt{arXiv:1812.07218}, to appear in \emph{Journal de
  Mathématiques Pures et Appliquées}, year = 2018.

\bibitem[Fuj17]{Fuj17}
Kento Fujita.
\newblock Examples of {K}-unstable {F}ano manifolds with the {P}icard number 1.
\newblock {\em Proc. Edinb. Math. Soc. (2)}, 60(4):881--891, 2017.

\bibitem[HS82]{HS82}
Alan~T. Huckleberry and Dennis~M. Snow.
\newblock Almost-homogeneous {K}\"{a}hler manifolds with hypersurface orbits.
\newblock {\em Osaka Math. J.}, 19(4):763--786, 1982.

\bibitem[Hwa94]{AndrewHwang}
Andrew~D. Hwang.
\newblock On existence of {K}\"{a}hler metrics with constant scalar curvature.
\newblock {\em Osaka J. Math.}, 31(3):561--595, 1994.

\bibitem[{Kan}19]{Kan}
Akihiro {Kanemitsu}.
\newblock {Fano manifolds and stability of tangent bundles}.
\newblock \texttt{arXiv:1912.12617}, 2019.

\bibitem[Kno91]{Kno91}
Friedrich Knop.
\newblock The {L}una-{V}ust theory of spherical embeddings.
\newblock In {\em Proceedings of the {H}yderabad {C}onference on {A}lgebraic
  {G}roups ({H}yderabad, 1989)}, pages 225--249. Manoj Prakashan, Madras, 1991.

\bibitem[KS86]{KS86}
Norihito Koiso and Yusuke Sakane.
\newblock {Nonhomogeneous {K}\"{a}hler-{E}instein metrics on compact complex
  manifolds}.
\newblock In {\em Curvature and topology of {R}iemannian manifolds ({K}atata,
  1985)}, volume 1201 of {\em Lecture Notes in Math.}, pages 165--179.
  Springer, Berlin, 1986.

\bibitem[LS93]{LS93}
Claude LeBrun and Santiago~R. Simanca.
\newblock On the {K}\"{a}hler classes of extremal metrics.
\newblock In {\em Geometry and global analysis ({S}endai, 1993)}, pages
  255--271. Tohoku Univ., Sendai, 1993.

\bibitem[{Mar}18]{JesusMartinezGarcia}
Jesus {Martinez-Garcia}.
\newblock {Constant scalar curvature Kähler metrics on rational surfaces}.
\newblock \texttt{arXiv:1712.04857v2}, 2018.

\bibitem[OO13]{OO13}
Yuji Odaka and Takuzo Okada.
\newblock Birational superrigidity and slope stability of {F}ano manifolds.
\newblock {\em Math. Z.}, 275(3-4):1109--1119, 2013.

\bibitem[Pas09]{Pas09}
Boris Pasquier.
\newblock On some smooth projective two-orbit varieties with {P}icard number 1.
\newblock {\em Math. Ann.}, 344(4):963--987, 2009.

\bibitem[Pez09]{Pez09}
Guido Pezzini.
\newblock Automorphisms of wonderful varieties.
\newblock {\em Transform. Groups}, 14(3):677--694, 2009.

\end{thebibliography}

\end{document}